%
%
%
%
\documentclass{amsart}

\usepackage{tikz, float}
\usepackage{url}

\newtheorem{theorem}{Theorem}[section]
\newtheorem{lemma}[theorem]{Lemma}

\theoremstyle{definition}

\newtheorem{problem}[theorem]{Problem}

\theoremstyle{remark}

\numberwithin{equation}{section}



\begin{document}

\title[Undecidability of Translational Tiling of the Plane with Four Tiles]{Undecidability of Translational Tiling of the Plane with Four Tiles}

\author{Chao Yang}
\address{School of Mathematics and Statistics, Guangdong University of Foreign Studies, Guangzhou, 510006, China}
\email{sokoban2007@163.com, yangchao@gdufs.edu.cn}

\author{Zhujun Zhang}
\address{Big Data Center of Fengxian District, Shanghai, 201499, China}
\email{zhangzhujun1988@163.com}

\subjclass[2020]{Primary 52C22; Secondary 68Q17}



\keywords{tiling, translation, undecidability, polyomino}

\begin{abstract}
The translational tiling problem, dated back to Wang's domino problem in the 1960s, is one of the most representative undecidable problems in the field of discrete geometry and combinatorics. Ollinger initiated the study of the undecidability of translational tiling with a fixed number of tiles in 2009, and proved that translational tiling of the plane with a set of $11$ polyominoes is undecidable. The number of polyominoes needed to obtain undecidability was reduced from $11$ to $7$ by Yang and Zhang, and then to $5$ by Kim. We show that translational tiling of the plane with a set of $4$ (disconnected) polyominoes is undecidable in this paper.
\end{abstract}

\maketitle

\section{Introduction} 

In all branches of mathematics, there are some basic and fundamental problems that have been shown to be undecidable. Poonen gives a very concise survey for undecidable problems in various fields of mathematics \cite{p14}. Wang's domino tiling problem is a representative one in discrete geometry and combinatorics. A \textit{Wang tile} is a unit square with each edge assigned a color. Given a finite set of Wang tiles (see Figure \ref{fig_w3} for an example), Wang considered the problem of tiling the entire plane with translated copies of the tiles of the set, under the conditions that the tiles must be edge-to-edge and the color of common edges of any two adjacent Wang tiles must be the same \cite{wang61}. Wang asked for a general algorithm to decide the tilability of an arbitrary set of Wang tiles. This is known as \textit{Wang's domino problem}. Berger proved the undecidability of Wang's domino problem in the 1960s~\cite{b66}.

\begin{theorem}[\cite{b66}]\label{thm_berger}
    Wang's domino problem is undecidable.
\end{theorem}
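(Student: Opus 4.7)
The plan is to reduce an undecidable problem about Turing machines, namely the halting problem on empty input, to the non-tileability of finite sets of Wang tiles. Given a Turing machine $M$, I would construct a finite set $T_M$ of Wang tiles, computably from the description of $M$, such that $T_M$ admits a valid tiling of the entire plane if and only if $M$ does not halt on the blank input. Since the halting problem is undecidable, this reduction establishes Theorem \ref{thm_berger}.

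First I would build a family of \emph{computation tiles} that encode the transition function of $M$. Each horizontal row of a tiling is to be interpreted as a snapshot of the tape at a fixed time, with a single distinguished tile in the row carrying the current state and the head position. Color matching on vertical edges forces the row above to agree with the row below, except at the head position, where the colors exactly realize one application of a transition rule of $M$; color matching on horizontal edges propagates unchanged tape symbols across a row. With a tile for the halting state omitted, valid tilings of a sufficiently large rectangle are in bijection with finite non-halting computation histories of $M$ on appropriate initial tape contents.

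The central obstacle, and the real content of Berger's theorem, is that tiling of the whole plane is translation-invariant and offers no distinguished origin where a computation could start. A naive construction using only the computation tiles is always tileable by trivial periodic patterns in which no computation is ever initiated. To defeat this, I would overlay the computation tiles onto an \emph{aperiodic backbone}: a Wang tile set whose only valid tilings of the plane exhibit an infinite hierarchy of nested square regions of strictly increasing side lengths. Such a set can be obtained by a self-similar ``squaring'' construction in which level-$k$ tiles are constrained to aggregate into blocks that, viewed at a larger scale, behave as single level-$(k{+}1)$ tiles. Within every square of the hierarchy, one plants a fresh instance of the computation tiles forced to begin in a corner, so that tileability of the plane implies successful simulation of $M$ for arbitrarily many steps.

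The hard part will be designing the aperiodic backbone and verifying that it genuinely forces the hierarchical structure at every scale. This is a delicate combinatorial task: colors must simultaneously transmit local coordinates inside each block, announce the block's role to the next level up, and compose consistently so that the recursion extends to infinity without accidental periodic solutions. Once the hierarchy is rigorously in place, the gluing of the computation tiles is comparatively routine, and one checks that $T_M$ tiles the plane precisely when $M$ fails to halt on blank input. Undecidability of the halting problem then yields undecidability of Wang's domino problem, completing the reduction.
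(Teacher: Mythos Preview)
The paper does not prove Theorem~\ref{thm_berger}; it states it with a citation to Berger~\cite{b66} and then uses it as a black box in the reduction establishing Theorem~\ref{thm_main}. So there is no ``paper's own proof'' to compare your proposal against.

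That said, your outline is a faithful high-level sketch of the classical approach (Berger's original argument, and the cleaner later variants due to Robinson and others): build an aperiodic tile set whose tilings necessarily contain a self-similar hierarchy of nested squares, then superimpose Turing-machine computation tiles so that each square hosts an initial segment of a run on blank input, and omit any tile corresponding to a halting configuration. Your identification of the hard step---designing and verifying the aperiodic backbone---is accurate; this is where essentially all the work lies, and your sketch does not (and at this level of detail cannot) discharge it. If you were actually asked to supply a proof here, you would need to either reproduce a concrete aperiodic construction with a rigorous hierarchy lemma, or cite one. For the purposes of this paper, a citation is exactly what the authors do.
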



\begin{figure}[ht]
\begin{center}
\begin{tikzpicture}[scale=0.5]

\draw [ fill=green!50] (0,0)--(2,2)--(2,0)--(0,0);
\draw [ fill=red!50] (0,0)--(2,-2)--(2,0)--(0,0);
\draw [ fill=red!50] (4,0)--(2,2)--(2,0)--(4,0);
\draw [ fill=yellow!50] (4,0)--(2,-2)--(2,0)--(4,0);

\draw [ fill=yellow!50] (6+0,0)--(6+2,2)--(6+2,0)--(6+0,0);
\draw [ fill=blue!50] (6+0,0)--(6+2,-2)--(6+2,0)--(6+0,0);
\draw [ fill=blue!50] (6+4,0)--(6+2,2)--(6+2,0)--(6+4,0);
\draw [ fill=red!50] (6+4,0)--(6+2,-2)--(6+2,0)--(6+4,0);

\draw [ fill=red!50] (12+0,0)--(12+2,2)--(12+2,0)--(12+0,0);
\draw [ fill=yellow!50] (12+0,0)--(12+2,-2)--(12+2,0)--(12+0,0);
\draw [ fill=yellow!50] (12+4,0)--(12+2,2)--(12+2,0)--(12+4,0);
\draw [ fill=green!50] (12+4,0)--(12+2,-2)--(12+2,0)--(12+4,0);

\end{tikzpicture}
\end{center}
\caption{A set of $3$ Wang tiles} \label{fig_w3}
\end{figure}
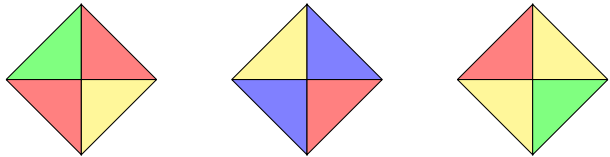

Note the size (i.e., number of tiles) of the set of Wang tiles can be arbitrarily large in Wang's domino problem. If the size of the set of Wang tiles is fixed, then Wang's domino problem is decidable, as there are only a finite number of instances. Ollinger generalizes Wang's domino problem by considering the translational tiling problem with a set of polyominoes of fixed size.

\begin{problem}[Translational Tiling Problem with $k$ Polyominoes] \label{pro_main}
Let $k$ be a fixed positive integer. Is there a general algorithm to decide whether the entire plane can be tiled by translated copies of tiles in an arbitrary set $S$ of $k$ polyominoes?
\end{problem}

If a polyomino can tile the plane, then it can always tile the plane periodically. As a result, the translational tiling problem with a single polyomino (i.e., $k$=1) is decidable \cite{bn91, w15}, even if the tile is disconnected \cite{b20, gt21}. On the other hand, Ollinger gave the first undecidability result with a fixed number of tiles by showing that the Problem \ref{pro_main} is undecidable for $k=11$ \cite{o09}. Because aperiodicity is a necessary condition for undecidability, and the size of the smallest aperiodic set of polyominoes for the translational tiling of the plane is 8 of Ammann's set \cite{ags92} from the 1990s, Ollinger suggested investigating the undecidability of the translational tiling problem with $k$ polyominoes for $k=8,9,10$. Yang and Zhang confirmed that translational tiling problems with $k$ polyominoes are indeed undecidable for $k=8,9,10$ in a series of works \cite{yang23, yang23b, yz24}. It turns out that $k=8$ is not optimal for the undecidability for Problem \ref{pro_main}. The translational tiling problem with $k$ polyominoes was later shown to be undecidable for $k=7$ by Yang and Zhang \cite{yz24d} and $k=5$ by Kim \cite{k25}.

The main contribution of this paper is the undecidability of Problem \ref{pro_main} for $k=4$,.

\begin{theorem}\label{thm_main}
    Translational tiling of the plane with a set of $4$ tiles is undecidable.
\end{theorem}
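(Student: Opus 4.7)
The plan is to reduce Wang's domino problem, which is undecidable by Theorem \ref{thm_berger}, to Problem \ref{pro_main} with $k=4$. Given an arbitrary finite set $W$ of Wang tiles, I will construct a set $S$ of four (disconnected) polyominoes such that $S$ translationally tiles the plane if and only if $W$ admits a valid Wang tiling of the plane. Undecidability then follows immediately.

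The construction will follow the general template established by Ollinger, refined by Yang--Zhang, and further compressed by Kim, but must trim one more tile from Kim's 5-tile scheme. I envision $S = \{F, E_1, E_2, E_3\}$ where $F$ is a large ``skeleton'' polyomino whose shape forces the copies of $F$ in any tiling to lie on a translate of a fixed rectangular lattice, carving the plane into uniform ``cells''; and $E_1, E_2, E_3$ are disconnected ``encoder'' polyominoes whose small extra components (bumps and pockets) are placed so as to represent the $|W|$ color alphabets along the four sides of each cell. The fact that polyominoes are allowed to be disconnected, which does not affect decidability for $k=1$ by \cite{b20, gt21}, is essential: it lets a single tile carry many color bits in distant satellite squares, which is what makes four tiles plausibly sufficient.

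The proof would proceed in four steps. First, I would define $F$ carefully and prove a \emph{rigidity lemma}: in any tiling of $\mathbb{R}^2$ by $S$, the copies of $F$ must sit on a translate of a specific lattice, producing a regular array of cells with prescribed interstitial regions. Second, I would analyze the ways in which $E_1, E_2, E_3$ can occupy the interstitial regions around each cell boundary and show that legal local configurations correspond bijectively to placements of Wang tiles from $W$ with matching colors on shared edges. Third, the forward direction: given a Wang tiling of the plane by $W$, I explicitly build a tiling by $S$ by placing the appropriate encoder configuration in each cell. Fourth, the reverse direction: any tiling by $S$ inherits the lattice structure from the rigidity lemma and decodes to a Wang tiling.

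The main obstacle will be the rigidity analysis and the exhaustive classification of legal local encoder configurations. With only four tile types instead of five, each individual tile must absorb strictly more of the combined geometric and combinatorial burden, which leaves substantially more room for unintended ``parasitic'' tilings: the encoders might shift by an unintended amount, swap roles, interlock across cell boundaries in ways that short-circuit the color matching, or the skeleton $F$ itself might admit a stray lattice orbit not corresponding to any Wang configuration. Crafting $F$ and the encoders so that every such alternative configuration is provably blocked --- while still admitting \emph{all} intended Wang encodings --- is the technical crux. In practice this is likely to require new asymmetric shape features on $F$ (e.g., staggered teeth or offset notches) that pin the encoders in place geometrically, together with a careful case analysis in the spirit of \cite{yz24d, k25}, pushed one step further.
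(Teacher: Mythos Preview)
Your proposal is a research outline rather than a proof, and the specific decomposition you sketch---one rigid skeleton $F$ plus three encoder tiles $E_1,E_2,E_3$---is not the paper's and, as stated, lacks the mechanism that makes four tiles work.

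The paper's four tiles are a tiny plus-shaped \emph{filler}, a single \emph{linker}, a \emph{locator}, and a single \emph{encoder}. The step from Kim's five tiles to four is achieved not by absorbing an encoder into the skeleton, but by merging the two linker tiles (present in every prior construction in this line) into one. Two concrete new ideas make this merger possible. First, the color bits are no longer carried by two different bump shapes; instead each building block has an $L$-shaped handle, and a single plus-shaped bump is attached at one of two positions ($N$ or $F$) on that handle. A single linker with a bump at location $A$ and a dent at location $C$ then serves both bit values. Second, the encoder carries \emph{all} simulated Wang tiles in one piece, with the encoding portions placed at slots indexed by powers of $2$, each Wang tile duplicated three times. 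The power-of-$2$ spacing guarantees that when two encoders lie in adjacent rows, at most one pair of encoding slots can be vertically aligned (since $2^a-2^b=2^c-2^d$ forces $\{a,b\}=\{c,d\}$); the threefold redundancy then lets a pigeonhole argument pick, for each new encoder, one of three horizontal shifts that avoids the forbidden alignment with both neighbours already placed.

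Your outline contains neither idea and explicitly leaves the elimination of parasitic configurations as an open ``technical crux''. More seriously, your three-encoder scheme has no selection mechanism: the paper uses one encoder whose horizontal position inside the locator lattice determines which simulated Wang tile is exposed, so different cells can realise different tiles from $W$. With three fixed shapes $E_1,E_2,E_3$ and no stated way for a cell to choose among $|W|$ options, it is unclear how your construction would simulate an arbitrary Wang tiling at all.
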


Following Ollinger's reduction framework, Theorem \ref{thm_main} will be proved by reduction from Wang's domino problem. The colors of Wang tiles are encoded in a binary system consisting of two different shapes of dents or bumps added to a polyomino. To simulate the color matching rules of Wang tiles, all previous works \cite{k25, o09, yang23, yang23b, yz24, yz24d} naturally need two or more tiles to link the two types of dents or bumps, respectively. One of the most important innovations in this paper is to use only one tile to link the different shapes. As a result, the total number of tiles is reduced by one.

Note that some of the tiles that we will construct to prove our main result (Theorem \ref{thm_main}) are not connected. So in this paper, a polyomino can be disconnected. The rest of the paper is organized as follows. Section \ref{sec_bb} introduces the building blocks. Section \ref{sec_4t} completes the definition of four tiles. Section \ref{sec_proof} proves the main result. Section \ref{sec_remark} concludes with a few remarks.

\section{Building Blocks}\label{sec_bb}

A \textit{polyomino} is a finite set of unit squares that are aligned to the same lattice $\mathbb{Z}^2$. So, a polyomino may not be connected in this paper. A $1\times 1$ unit square is called a level-$1$ square. A $7\times 7$ square polyomino is called a level-$2$ square. All building blocks are based on rectangular polyominoes of size $84\times 14$ (of level-$1$ unit squares), which can also be viewed as consisting of $14\times 2$ level-$2$ squares (see Figure \ref{fig_bbb}). Dents and bumps can be added to the north and south sides of the  $84\times 14$ rectangular polyomino to obtain more versatile building blocks. Dents and bumps can be added to both sides (Figures \ref{fig_bb}, \ref{fig_bb_blank}, \ref{fig_bb_2}), or can be added to only one side (Figure \ref{fig_bb_half}).


\begin{figure}[ht]
\begin{center}
\begin{tikzpicture}[scale=0.14]

\draw  [fill=gray!20] (0,0)--(0,14)--(84,14)--(84,0)--(0,0);

\foreach \x in {1,...,11}
{
\draw [dashed] (\x*7,0)--(\x*7,14);
}
\draw [dashed] (0,7)--(84,7);

\foreach \x in {1,...,7}
{
\draw (\x,7)--(\x,0);
\draw (7,\x)--(0,\x);
}

\end{tikzpicture}
\end{center}
\caption{A rectangular polyomino of size $14\times 84$.} \label{fig_bbb}
\end{figure}
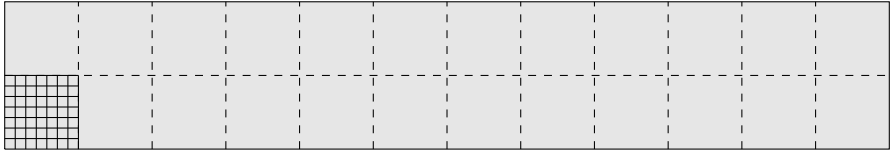

The basic shape of the dents or bumps is the plus-shape polyomino illustrated in Figure \ref{fig_tiny_filler}. This polyomino is also one of the four tiles in the proof of Theorem \ref{thm_main}, and it is called the \textit{tiny filler}. When the dents are not filled by the bumps in a tiling, the tiny filler will assume the responsibility of tiling all the remaining holes of the same plus-shape. We adopt the same shape for the tiny filler as in \cite{k25}. However, the tiny filler can be changed to any other shape as long as it coincides with itself under rotation of $\pi$ and cannot tile the plane with translated copies of itself.


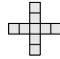
\begin{figure}[ht]
\begin{center}
\begin{tikzpicture}[scale=0.14]

\draw [fill=gray!20] (0,0)--(0,2)--(2,2)--(2,3)--(0,3)--(0,5)--(-1,5)--(-1,3)--(-3,3)--(-3,2)--(-1,2)--(-1,0)--(0,0);

\foreach \x in {-2,...,1}
{
\draw (\x,2)--(\x,3);
}
\foreach \y in {1,...,4}
{
\draw (-1,\y)--(0,\y);
}

\end{tikzpicture}
\end{center}
\caption{The tiny filler (plus-shape).} \label{fig_tiny_filler}
\end{figure}

As we have mentioned, the building blocks can be viewed as two rows of level-$2$ squares, with $12$ level-$2$ squares on each row. We divide the building block into three segments: the \textit{left segment} consists of $5\times 2$ level-$2$ squares on the left, the \textit{right segment} consists of $5\times 2$ level-$2$ squares on the right, and the \textit{middle segment} consists of $2\times 2$ level-$2$ squares in the middle. The level-$2$ squares on the left or right segments are labeled by $A$, $C$, $L$, $M$ or $R$, as illustrated in Figure \ref{fig_bb}.


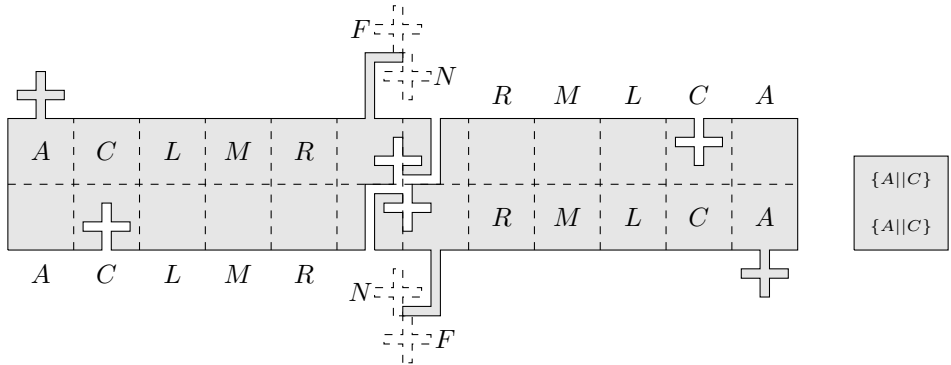
\begin{figure}[ht]
\begin{center}
\begin{tikzpicture}[scale=0.125]

\foreach \x in {7}
\foreach \y in {-84}
{\draw [dashed] (\x+34,105+\y)--(\x+34,107+\y)--(\x+32,107+\y)--(\x+32,108+\y)--(\x+34,108+\y)--(\x+34,110+\y)--(\x+35,110+\y)--(\x+35,108+\y)--(\x+37,108+\y)--(\x+37,107+\y)--(\x+35,107+\y)--(\x+35,105+\y)--(\x+34,105+\y);}
\foreach \x in {8}
\foreach \y in {-89}
{\draw [dashed] (\x+34,105+\y)--(\x+34,107+\y)--(\x+32,107+\y)--(\x+32,108+\y)--(\x+34,108+\y)--(\x+34,110+\y)--(\x+35,110+\y)--(\x+35,108+\y)--(\x+37,108+\y)--(\x+37,107+\y)--(\x+35,107+\y)--(\x+35,105+\y)--(\x+34,105+\y);}
\foreach \x in {7}
\foreach \y in {-112}
{\draw [dashed] (\x+34,105+\y)--(\x+34,107+\y)--(\x+32,107+\y)--(\x+32,108+\y)--(\x+34,108+\y)--(\x+34,110+\y)--(\x+35,110+\y)--(\x+35,108+\y)--(\x+37,108+\y)--(\x+37,107+\y)--(\x+35,107+\y)--(\x+35,105+\y)--(\x+34,105+\y);}
\foreach \x in {8}
\foreach \y in {-117}
{\draw [dashed] (\x+34,105+\y)--(\x+34,107+\y)--(\x+32,107+\y)--(\x+32,108+\y)--(\x+34,108+\y)--(\x+34,110+\y)--(\x+35,110+\y)--(\x+35,108+\y)--(\x+37,108+\y)--(\x+37,107+\y)--(\x+35,107+\y)--(\x+35,105+\y)--(\x+34,105+\y);}

\draw  [fill=gray!20] (0,0)--(0,14)--(3,14)--(3,16)--(1,16)--(1,17)--(3,17)--(3,19)--(4,19)--(4,17)--(6,17)--(6,16)--(4,16)--(4,14)--(38,14)--(38,21)--(42,21)--(42,20)--(39,20)--(39,14)--(45,14)--(45,8)--(42,8)--(42,9)--(44,9)--(44,10)--(42,10)--(42,12)--(41,12)--(41,10)--(39,10)--(39,9)--(41,9)--(41,7)--(38,7)--(38,0)--(11,0)--(11,2)--(13,2)--(13,3)--(11,3)--(11,5)--(10,5)--(10,3)--(8,3)--(8,2)--(10,2)--(10,0)--(0,0);

\draw [fill=gray!20] (39,0)--(39,6)--(42,6)--(42,5)--(40,5)--(40,4)--(42,4)--(42,2)--(43,2)--(43,4)--(45,4)--(45,5)--(43,5)--(43,7)--(46,7)--(46,14)--(73,14)--(73,12)--(71,12)--(71,11)--(73,11)--(73,9)--(74,9)--(74,11)--(76,11)--(76,12)--(74,12)--(74,14)--(84,14)--(84,0)--(81,0)--(81,-2)--(83,-2)--(83,-3)--(81,-3)--(81,-5)--(80,-5)--(80,-3)--(78,-3)--(78,-2)--(80,-2)--(80,0)--(46,0)--(46,-7)--(42,-7)--(42,-6)--(45,-6)--(45,0)--(39,0);

\node at (3.5,10.5) {$A$};
\node at (10.5,10.5) {$C$};
\node at (17.5,10.5) {$L$};
\node at (24.5,10.5) {$M$};
\node at (31.5,10.5) {$R$};

\node at (80.5,16.5) {$A$};
\node at (73.5,16.5) {$C$};
\node at (66.5,16.5) {$L$};
\node at (59.5,16.5) {$M$};
\node at (52.5,16.5) {$R$};

\node at (3.5,-2.5) {$A$};
\node at (10.5,-2.5) {$C$};
\node at (17.5,-2.5) {$L$};
\node at (24.5,-2.5) {$M$};
\node at (31.5,-2.5) {$R$};

\node at (80.5,3.5) {$A$};
\node at (73.5,3.5) {$C$};
\node at (66.5,3.5) {$L$};
\node at (59.5,3.5) {$M$};
\node at (52.5,3.5) {$R$};

\node at (37.5,23.5) {$F$};
\node at (46.5,18.5) {$N$};

\node at (37.5,-4.5) {$N$};
\node at (46.5,-9.5) {$F$};

\foreach \x in {1,...,11}
{
\draw [dashed] (\x*7,0)--(\x*7,14);
}
\draw [dashed] (0,7)--(84,7);

\draw [fill=gray!20] (90,0)--(100,0)--(100,10)--(90,10)--(90,0);

\node at (95,7.5) {\tiny $\{A||C\}$};
\node at (95,2.5) {\tiny $\{A||C\}$};

\end{tikzpicture}
\end{center}
\caption{The linker: a building block with label $\{A||C\}$ on the north and $\{A||C\}$ on the south (left), and its symbolic representation (right).} \label{fig_bb}
\end{figure}

For each level-$2$ square (of left or right segments) in the first row (resp. second row), a dent or a bump can be added to the north (resp. south). Two types of dents or bumps can be added to the building blocks. The first type of dents or bumps can be added directly to a level-$2$ square on the left or right segments. The second type of dents or bumps is added to the middle segment of the building blocks. An $L$-shape bump is added to the sixth level-$2$ square in the first row, and the seventh level-$2$ square in the second row. An $L$-shape dent is added to the seventh level-$2$ square in the first row, and the sixth level-$2$ square in the second row. A $L$-shape dent or bump is called a \textit{handle}. The locations for adding the dented or bumped handles in the middle segment are fixed, and these handles are always required. Because a bumped handle can only be matched by a dented handle, they enforce the alignment of building blocks in a tiling. In addition, two plus-shape dents are also always attached to the dented handle as illustrated in Figure \ref{fig_bb}. For the bump part, at most one plus-shape bump can be attached to the bumped handle at the locations depicted in dashed lines in Figure \ref{fig_bb}. The two locations that a plus-shape bump can be attached to the bumped handle are denoted by $N$ and $F$ (see Figure \ref{fig_bb}), respectively. Compare to the dents or bumps of the first type, the plus-shape dents of the second type are not added to the level-$2$ squares directly, but indirectly through the handles. Therefore, the two types of dents or bumps form two independent matching systems which are used for two distinct purposes. As we shall see later, the first type enforces the overall rigid tiling pattern, and the second type enforces the simulation of the matching rules of Wang tiles.

Below is a summary of all locations that a plus-shape dent or bump can be added. Different locations have different intended purposes, which will be revealed in later sections.

\begin{itemize}
    \item $A$: for linking two \textbf{adjacent} building blocks that are encoding colors;
    \item $C$: for indicating the building block is encoding \textbf{color} of Wang tiles (but the actual encoding job is done by $F$ and $N$, see below);
    \item $M$: for \textbf{marking} the two ends of a sequence of building blocks that encodes a colored edge of a Wang tile;
    \item $L$: for selecting a simulated a Wang tile out of the encoder (the encoder is one of the $4$ tiles that will be introduced in the next section) from the \textbf{left};
    \item $R$: for selecting a simulated a Wang tile out of the encoder from the \textbf{right};
    \item $F$: attached to the \textbf{far} side of the handle;
    \item $N$: attached to the \textbf{near} side of the handle; $F$ and $N$ are used for encoding colors in binary system.
\end{itemize}

Now we introduce a notation for labeling all the dents or bumps on one side (either north side or south side) of the building blocks, which generalizes the notation used in \cite{yz25}. A north side (resp. south) is denoted by a set with three separate parts $\{X_1,\cdots,X_r|Y_1,\cdots,Y_s|Z_1,\cdots,Z_t\}$, where $X_i$ means there is a plus-shape bump at location $X_i$ of the left segment (resp. right segment), $Y_i$ means there is a plus-shape bump at location $Y_i$ on the middles segment, and $Z_i$ means there is a plus-shape dent at location $Z_i$ of the right segment (resp. left segment). Give a label $S=\{X_1,\cdots,X_r|Y_1,\cdots,Y_s|Z_1,\cdots,Z_t\}$, let $S_L=\{X_1,\cdots,X_r\}$, $S_M=\{Y_1,\cdots,Y_s\}$, and $S_R=\{Z_1,\cdots,Z_t\}$. Note that the notation records the dents and bumps in different directions for the north side and south side: from left to right on the north side and from right on the south side.

With this notation, the building block illustrated in Figure \ref{fig_bb} is labeled $T=\{A||C\}$ (the second part is empty, namely $T_M=\emptyset$) on both the north and south sides, and is symbolically represented by a gray square with two labels on the north and south sides (on the right of Figure \ref{fig_bb}). This building block is called \textit{linker}, which is also one of the four tiles that will be introduced in the next section. One of the most important innovative techniques introduced in this paper is the use of a single linker, compared to the two linkers in all previous proofs for undecidability results following Ollinger's framework.


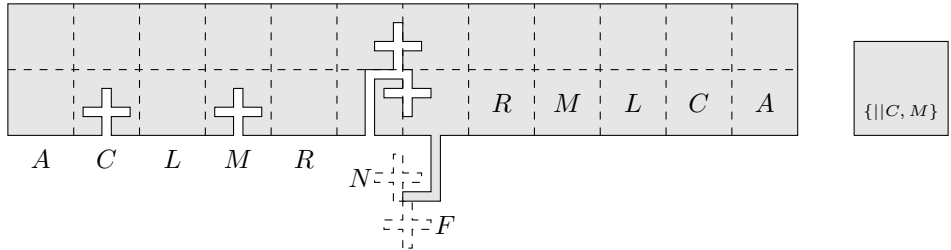
\begin{figure}[ht]
\begin{center}
\begin{tikzpicture}[scale=0.125]

\foreach \x in {7}
\foreach \y in {-112}
{\draw [dashed] (\x+34,105+\y)--(\x+34,107+\y)--(\x+32,107+\y)--(\x+32,108+\y)--(\x+34,108+\y)--(\x+34,110+\y)--(\x+35,110+\y)--(\x+35,108+\y)--(\x+37,108+\y)--(\x+37,107+\y)--(\x+35,107+\y)--(\x+35,105+\y)--(\x+34,105+\y);}
\foreach \x in {8}
\foreach \y in {-117}
{\draw [dashed] (\x+34,105+\y)--(\x+34,107+\y)--(\x+32,107+\y)--(\x+32,108+\y)--(\x+34,108+\y)--(\x+34,110+\y)--(\x+35,110+\y)--(\x+35,108+\y)--(\x+37,108+\y)--(\x+37,107+\y)--(\x+35,107+\y)--(\x+35,105+\y)--(\x+34,105+\y);}


\draw [fill=gray!20] (39,0)--(39,6)--(42,6)--(42,5)--(40,5)--(40,4)--(42,4)--(42,2)--(43,2)--(43,4)--(45,4)--(45,5)--(43,5)--(43,7)--(42,7)--(42,9)--(44,9)--(44,10)--(42,10)--(42,12)--(41,12)--(41,10)--(39,10)--(39,9)--(41,9)--(41,7)--(38,7)--(38,0)--(25,0)--(25,2)--(27,2)--(27,3)--(25,3)--(25,5)--(24,5)--(24,3)--(22,3)--(22,2)--(24,2)--(24,0)--(11,0)--(11,2)--(13,2)--(13,3)--(11,3)--(11,5)--(10,5)--(10,3)--(8,3)--(8,2)--(10,2)--(10,0)--(0,0)--(0,14)--(84,14)--(84,0)--(81,0)--(46,0)--(46,-7)--(42,-7)--(42,-6)--(45,-6)--(45,0)--(39,0);

\node at (3.5,-2.5) {$A$};
\node at (10.5,-2.5) {$C$};
\node at (17.5,-2.5) {$L$};
\node at (24.5,-2.5) {$M$};
\node at (31.5,-2.5) {$R$};

\node at (80.5,3.5) {$A$};
\node at (73.5,3.5) {$C$};
\node at (66.5,3.5) {$L$};
\node at (59.5,3.5) {$M$};
\node at (52.5,3.5) {$R$};

\node at (37.5,-4.5) {$N$};
\node at (46.5,-9.5) {$F$};

\foreach \x in {1,...,11}
{
\draw [dashed] (\x*7,0)--(\x*7,14);
}
\draw [dashed] (0,7)--(84,7);

\draw [fill=gray!20] (90,0)--(100,0)--(100,10)--(90,10)--(90,0);

\node at (95,7.5) {};
\node at (95,2.5) {\tiny $\{||C,M\}$};

\end{tikzpicture}
\end{center}
\caption{A building block with label $\{||C,M\}$ on the south (left), and its symbolic representation (right).} \label{fig_bb_half}
\end{figure}


\begin{figure}[ht]
\begin{center}
\begin{tikzpicture}[scale=0.125]

\foreach \x in {7}
\foreach \y in {-84}
{\draw [dashed] (\x+34,105+\y)--(\x+34,107+\y)--(\x+32,107+\y)--(\x+32,108+\y)--(\x+34,108+\y)--(\x+34,110+\y)--(\x+35,110+\y)--(\x+35,108+\y)--(\x+37,108+\y)--(\x+37,107+\y)--(\x+35,107+\y)--(\x+35,105+\y)--(\x+34,105+\y);}
\foreach \x in {8}
\foreach \y in {-89}
{\draw [dashed] (\x+34,105+\y)--(\x+34,107+\y)--(\x+32,107+\y)--(\x+32,108+\y)--(\x+34,108+\y)--(\x+34,110+\y)--(\x+35,110+\y)--(\x+35,108+\y)--(\x+37,108+\y)--(\x+37,107+\y)--(\x+35,107+\y)--(\x+35,105+\y)--(\x+34,105+\y);}
\foreach \x in {7}
\foreach \y in {-112}
{\draw [dashed] (\x+34,105+\y)--(\x+34,107+\y)--(\x+32,107+\y)--(\x+32,108+\y)--(\x+34,108+\y)--(\x+34,110+\y)--(\x+35,110+\y)--(\x+35,108+\y)--(\x+37,108+\y)--(\x+37,107+\y)--(\x+35,107+\y)--(\x+35,105+\y)--(\x+34,105+\y);}
\foreach \x in {8}
\foreach \y in {-117}
{\draw [dashed] (\x+34,105+\y)--(\x+34,107+\y)--(\x+32,107+\y)--(\x+32,108+\y)--(\x+34,108+\y)--(\x+34,110+\y)--(\x+35,110+\y)--(\x+35,108+\y)--(\x+37,108+\y)--(\x+37,107+\y)--(\x+35,107+\y)--(\x+35,105+\y)--(\x+34,105+\y);}

\draw  [fill=gray!20] (0,0)--(0,14)--(38,14)--(38,21)--(42,21)--(42,20)--(39,20)--(39,14)--(45,14)--(45,8)--(42,8)--(42,9)--(44,9)--(44,10)--(42,10)--(42,12)--(41,12)--(41,10)--(39,10)--(39,9)--(41,9)--(41,7)--(38,7)--(38,0)--(0,0);

\draw [fill=gray!20] (39,0)--(39,6)--(42,6)--(42,5)--(40,5)--(40,4)--(42,4)--(42,2)--(43,2)--(43,4)--(45,4)--(45,5)--(43,5)--(43,7)--(46,7)--(46,14)--(84,14)--(84,0)--(81,0)--(46,0)--(46,-7)--(42,-7)--(42,-6)--(45,-6)--(45,0)--(39,0);

\node at (3.5,10.5) {$A$};
\node at (10.5,10.5) {$C$};
\node at (17.5,10.5) {$L$};
\node at (24.5,10.5) {$M$};
\node at (31.5,10.5) {$R$};

\node at (80.5,16.5) {$A$};
\node at (73.5,16.5) {$C$};
\node at (66.5,16.5) {$L$};
\node at (59.5,16.5) {$M$};
\node at (52.5,16.5) {$R$};

\node at (3.5,-2.5) {$A$};
\node at (10.5,-2.5) {$C$};
\node at (17.5,-2.5) {$L$};
\node at (24.5,-2.5) {$M$};
\node at (31.5,-2.5) {$R$};

\node at (80.5,3.5) {$A$};
\node at (73.5,3.5) {$C$};
\node at (66.5,3.5) {$L$};
\node at (59.5,3.5) {$M$};
\node at (52.5,3.5) {$R$};

\node at (37.5,23.5) {$F$};
\node at (46.5,18.5) {$N$};

\node at (37.5,-4.5) {$N$};
\node at (46.5,-9.5) {$F$};

\foreach \x in {1,...,11}
{
\draw [dashed] (\x*7,0)--(\x*7,14);
}
\draw [dashed] (0,7)--(84,7);

\draw [fill=gray!20] (90,0)--(100,0)--(100,10)--(90,10)--(90,0);

\node at (95,7.5) {\tiny $\{||\}$};
\node at (95,2.5) {\tiny $\{||\}$};

\end{tikzpicture}
\end{center}
\caption{A blank building block with label $\{||\}$ on the north and $\{||\}$ on the south (left), and its symbolic representation (right).} \label{fig_bb_blank}
\end{figure}
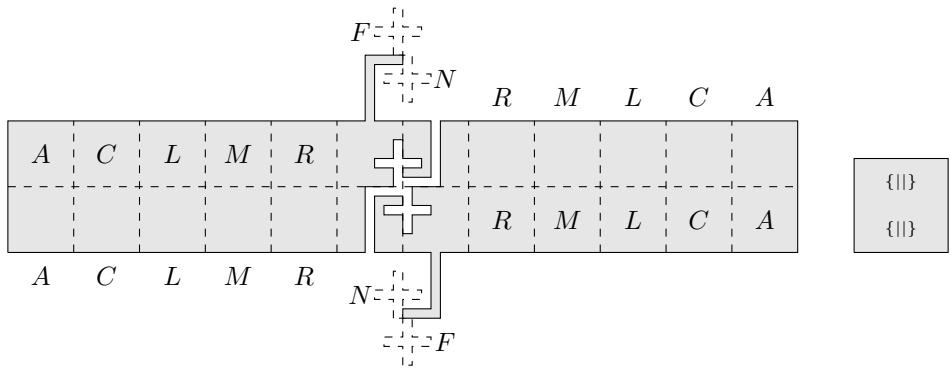


\begin{figure}[ht]
\begin{center}
\begin{tikzpicture}[scale=0.125]

\foreach \x in {8}
\foreach \y in {-89}
{\draw [dashed] (\x+34,105+\y)--(\x+34,107+\y)--(\x+32,107+\y)--(\x+32,108+\y)--(\x+34,108+\y)--(\x+34,110+\y)--(\x+35,110+\y)--(\x+35,108+\y)--(\x+37,108+\y)--(\x+37,107+\y)--(\x+35,107+\y)--(\x+35,105+\y)--(\x+34,105+\y);}
\foreach \x in {8}
\foreach \y in {-117}
{\draw [dashed] (\x+34,105+\y)--(\x+34,107+\y)--(\x+32,107+\y)--(\x+32,108+\y)--(\x+34,108+\y)--(\x+34,110+\y)--(\x+35,110+\y)--(\x+35,108+\y)--(\x+37,108+\y)--(\x+37,107+\y)--(\x+35,107+\y)--(\x+35,105+\y)--(\x+34,105+\y);}

\draw  [fill=gray!20] (0,0)--(0,14)--(10,14)--(10,16)--(8,16)--(8,17)--(10,17)--(10,19)--(11,19)--(11,17)--(13,17)--(13,16)--(11,16)--(11,14)--(38,14)--(38,21)--(41,21)--(41,23)--(39,23)--(39,24)--(41,24)--(41,26)--(42,26)--(42,24)--(44,24)--(44,23)--(42,23)--(42,20)--(39,20)--(39,14)--(45,14)--(45,8)--(42,8)--(42,9)--(44,9)--(44,10)--(42,10)--(42,12)--(41,12)--(41,10)--(39,10)--(39,9)--(41,9)--(41,7)--(38,7)--(38,0)--(4,0)--(4,2)--(6,2)--(6,3)--(4,3)--(4,5)--(3,5)--(3,3)--(1,3)--(1,2)--(3,2)--(3,0)--(0,0);

\draw [fill=gray!20] (39,0)--(39,6)--(42,6)--(42,5)--(40,5)--(40,4)--(42,4)--(42,2)--(43,2)--(43,4)--(45,4)--(45,5)--(43,5)--(43,7)--(46,7)--(46,14)--(80,14)--(80,12)--(78,12)--(78,11)--(80,11)--(80,9)--(81,9)--(81,11)--(83,11)--(83,12)--(81,12)--(81,14)--(84,14)--(84,0)--(74,0)--(74,-2)--(76,-2)--(76,-3)--(74,-3)--(74,-5)--(73,-5)--(73,-3)--(71,-3)--(71,-2)--(73,-2)--(73,0)--(46,0)--(46,-7)--(41,-7)--(41,-5)--(39,-5)--(39,-4)--(41,-4)--(41,-2)--(42,-2)--(42,-4)--(44,-4)--(44,-5)--(42,-5)--(42,-6)--(45,-6)--(45,0)--(39,0);

\node at (3.5,10.5) {$A$};
\node at (10.5,10.5) {$C$};
\node at (17.5,10.5) {$L$};
\node at (24.5,10.5) {$M$};
\node at (31.5,10.5) {$R$};

\node at (80.5,16.5) {$A$};
\node at (73.5,16.5) {$C$};
\node at (66.5,16.5) {$L$};
\node at (59.5,16.5) {$M$};
\node at (52.5,16.5) {$R$};

\node at (3.5,-2.5) {$A$};
\node at (10.5,-2.5) {$C$};
\node at (17.5,-2.5) {$L$};
\node at (24.5,-2.5) {$M$};
\node at (31.5,-2.5) {$R$};

\node at (80.5,3.5) {$A$};
\node at (73.5,3.5) {$C$};
\node at (66.5,3.5) {$L$};
\node at (59.5,3.5) {$M$};
\node at (52.5,3.5) {$R$};

\node at (37.5,23.5) {$F$};
\node at (46.5,18.5) {$N$};

\node at (37.5,-4.5) {$N$};
\node at (46.5,-9.5) {$F$};

\foreach \x in {1,...,11}
{
\draw [dashed] (\x*7,0)--(\x*7,14);
}
\draw [dashed] (0,7)--(84,7);

\draw [fill=gray!20] (90,0)--(100,0)--(100,10)--(90,10)--(90,0);

\node at (95,7.5) {\tiny $\{C|F|A\}$};
\node at (95,2.5) {\tiny $\{C|N|A\}$};

\end{tikzpicture}
\end{center}
\caption{A building block with label $\{C|F|A\}$ on the north and $\{C|N|A\}$ on the south (left), and its symbolic representation (right).} \label{fig_bb_2}
\end{figure}
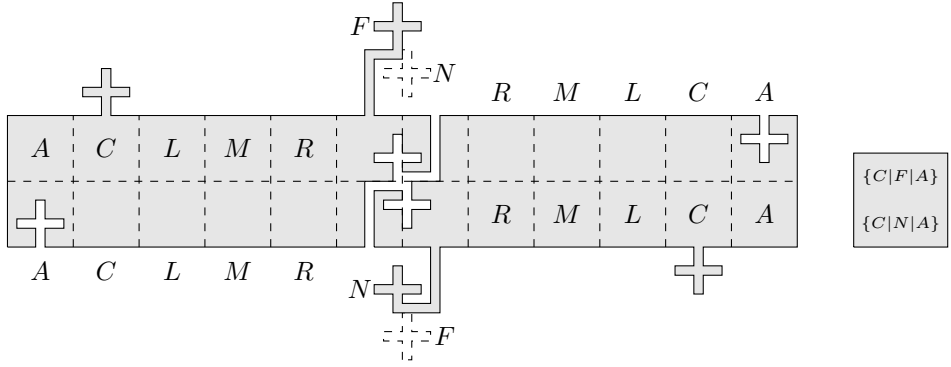

When the building blocks are put together to form bigger tiles, sometimes only one of the two sides (the north and south sides) is exposed as the boundary of the bigger tile. In this case, the dents or bumps are added only to one side (the boundary side) of the building block. Figure \ref{fig_bb_half} illustrates a building block with dents and bumps only on the south side, and its symbolic representation. Two more examples of building blocks are illustrated in Figure \ref{fig_bb_blank} and Figure \ref{fig_bb_2}. Figure \ref{fig_bb_blank} illustrates a building block that is labeled with the empty notation $\{||\}$ on both sides, that is, no extra dents or bumps are added to either side.  By comparing the north sides of the two building blocks illustrated in Figure \ref{fig_bb_half} and Figure \ref{fig_bb_blank}, we can notice the slight difference between a side without a label and a side with the empty label $\{||\}$. Figure \ref{fig_bb_2} illustrates a type of building block that will be used to define the encoder, one of the four tiles, in the next section. Note that the location (either $F$ or $N$) of bump added to the middle segment encodes one bit of information.

With the notations for labeling the building blocks, we can state the following lemma on the conditions under which two building blocks can be placed next to each other.

\begin{lemma}\label{lem_bb}
Let $B_1$ be a building block with a label $S$ on the south side, and $B_2$ be a building block with a label $T$ on the north side. Then $B_1$ can be placed adjacent to the north of $B_2$ without overlap if and only if $T_L\subseteq S_R$ and $S_L\subseteq T_R$.
\end{lemma}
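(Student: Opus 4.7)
The plan is to argue that the three segments (left, middle, right) can be analyzed independently, and then show that the middle segment is automatically compatible while the left and right segments yield exactly the two conditions claimed.

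First I would observe that the rectangular bodies of $B_1$ and $B_2$ (each of size $84\times 14$) occupy disjoint regions by construction, so overlap can occur only through the dents and bumps that protrude across the common horizontal line where the south side of $B_1$ meets the north side of $B_2$. Since the plus-shape dents/bumps on the left or right segments live entirely above or below that shared boundary within one level-$2$ column, and the middle segment is geometrically separated from the left and right segments, the question of overlap decomposes into three independent conditions — one per segment.

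Next I would dispatch the middle segment. By the construction in Section~\ref{sec_bb}, the handles are always present: on the south side of $B_1$ there is a bumped $L$-handle at the seventh level-$2$ square and a dented $L$-handle at the sixth, while on the north side of $B_2$ there is a bumped $L$-handle at the sixth and a dented one at the seventh. Thus the two bumped handles sit opposite the two dented handles and lock together. Moreover, the two plus-shape dents attached to each dented handle always occupy both the $F$ and $N$ positions, while the opposing bumped handle carries at most one plus-shape bump at $F$ or $N$. Hence whatever $S_M$ and $T_M$ are, every middle-segment bump finds a dent to fit into and no overlap can occur in the middle segment.

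Then I would analyze the left and right segments. After the middle handles align the two blocks, the left segment of the south of $B_1$ sits directly above the left segment of the north of $B_2$. By the notation convention, the plus-shape features at these positions are exactly the dents listed in $S_R$ (coming from $B_1$) and the bumps listed in $T_L$ (coming from $B_2$); these are the only features that could overlap there. No overlap occurs if and only if every bump of $T_L$ sits inside a matching dent of $S_R$, i.e.\ $T_L\subseteq S_R$. Symmetrically on the right segment, the bumps are $S_L$ (from $B_1$'s south, right segment) and the dents are $T_R$ (from $B_2$'s north, right segment), giving the condition $S_L\subseteq T_R$. Conjoining the three segment conditions yields the stated iff.

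The only mildly delicate step is keeping the left/right bookkeeping straight, because $S$ and $T$ list their bumps starting from opposite sides (right for south, left for north); I would verify this orientation convention explicitly once from Figure~\ref{fig_bb} so that the inclusions $T_L\subseteq S_R$ and $S_L\subseteq T_R$ really correspond to bumps on one block fitting into dents on the other at matching horizontal positions. Once this is clear, the rest of the proof is a direct visual check.
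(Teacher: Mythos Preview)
Your argument is correct and follows essentially the same route as the paper's proof: use the $L$-shaped handles to force horizontal alignment, observe that the middle segments then cannot conflict, and reduce to the bump/dent inclusions on the left and right segments. You are in fact more explicit than the paper about \emph{why} the middle segments never overlap (the dented handle always carries both $F$ and $N$ plus-shape dents, while the opposing bumped handle carries at most one), whereas the paper simply asserts this. The only presentational wrinkle is that your first paragraph speaks of the rectangular bodies being disjoint ``by construction'' before you have invoked the handle alignment; since alignment is what rules out horizontal shifts, it would be cleaner to lead with the handle argument (as the paper does) and then carry out your segment-by-segment analysis.
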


\begin{proof}
The $L$-shape bump (i.e., the handle) can only be matched by the $L$-shape dent, so the two building blocks $B_1$ and $B_2$ must be aligned when placed next to each other in north-south direction. In other words, the left (resp. middle, right) segment  of $B_1$ is adjacent to the left (resp. middle, right) segment of $B_2$. To avoid overlap, where there is a bump in one building block, there must be a dent in the other building block. The middle segments will not overlap with just two adjacent building blocks $B_1$ and $B_2$. So it suffices to make sure that the left and right segments do not overlap. It is straightforward to check that this is equivalent to $T_L\subseteq S_R$ and $S_L\subseteq T_R$.
\end{proof}

The conditions in Lemma \ref{lem_bb} only guarantee that there are no overlaps, but there may be some plus-shape holes left. As mentioned before, all these holes can be filled with the \textit{tiny filler} (Figure \ref{fig_tiny_filler}). 

In the remainder of this paper, symbolic representations and labels are used to depict the building blocks. Note that all the symbolic representation of the building blocks are not to scale.

\section{The Set of Four Tiles}\label{sec_4t}

Given a set of Wang tiles, we will construct a set of four tiles: a tiny filler, a linker, a locator, and an encoder. The tiny filler and linker have already been defined in the previous section. The locator and encoder are introduced in this section. The tiny filler and linker are fixed in size. In other words, their sizes do not depend on the number of tiles or edge colors in a set of Wang tiles. However, the size of the locator and encoder increases along with the size of the set of Wang tiles.


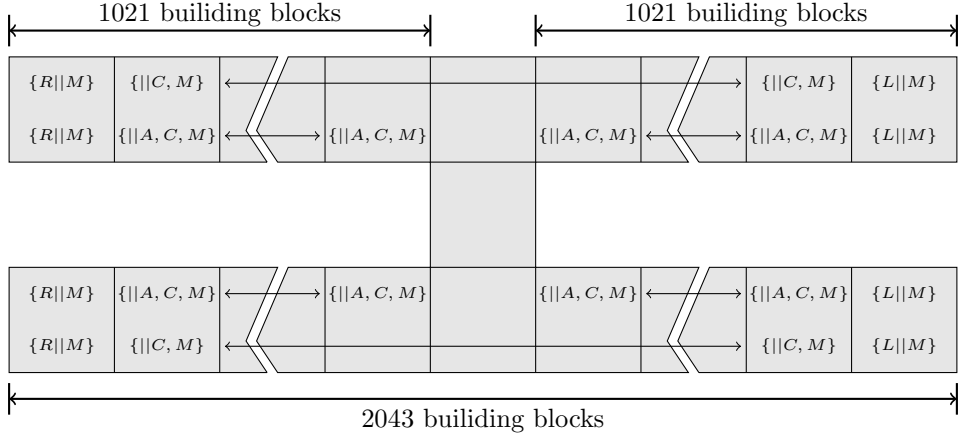
\begin{figure}[ht]
\begin{center}
\begin{tikzpicture}[scale=0.7]

\draw [ fill=gray!20] (0,0)--(0,2)--(2.9,2)--(2.5,2.6)--(3.1,4)--(-4.7,4)--(-5.3,2.6)--(-4.9,2)--(-2,2)--(-2,0)--(-4.7,0)--(-5.3,-1.4)--(-4.9,-2)--(2.9,-2)--(2.5,-1.4)--(3.1,0)--(0,0);

\draw [ fill=gray!20] (3.1,-2)--(2.7,-1.4)--(3.3,0)--(8,0)--(8,-2)--(3.1,-2);
\draw [ fill=gray!20] (3.1,2)--(2.7,2.6)--(3.3,4)--(8,4)--(8,2)--(3.1,2);
\draw [fill=gray!20] (-5.1,2)--(-5.5,2.6)--(-4.9,4)--(-10,4)--(-10,2)--(-5.1,2);
\draw [fill=gray!20] (-5.1,-2)--(-5.5,-1.4)--(-4.9,0)--(-10,0)--(-10,-2)--(-5.1,-2);

\draw (0,0)--(-2,0);
\draw (0,2)--(-2,2);

\foreach \x in {-8,-6,-4,-2,0,2,4,6}
\foreach \y in {-2,2}
{
\draw (\x,\y)--(\x,\y+2);
}

\foreach \x in {-9}
\foreach \y in {-2,-1,2,3}
{
\node at (\x,\y+0.5) {\tiny ${\{R||M\}}$}; 
}

\foreach \x in {7}
\foreach \y in {-2,-1,2,3}
{
\node at (\x,\y+0.5) {\tiny ${\{L||M\}}$}; 
}

\foreach \x in {-7,-3,1,5}
\foreach \y in {-1,2}
{
\node at (\x,\y+0.5) {\tiny ${\{||A,C,M\}}$}; 
}
\foreach \x in {-7,1}
\foreach \y in {-1,2}
{
\draw [<->] (\x+1.1,\y+0.5)--(\x+2.9,\y+0.5);
}

\foreach \x in {-7,5}
\foreach \y in {-2,3}
{
\node at (\x,\y+0.5) {\tiny ${\{||C,M\}}$}; 
}
\foreach \x in {-7}
\foreach \y in {-2,3}
{
\draw [<->] (\x+1.1,\y+0.5)--(\x+10.9,\y+0.5);
}

\draw [thick] (-10,-2.8)--(-10,-2.2);
\draw [thick] (8,-2.8)--(8,-2.2);
\draw [<->,thick] (-10,-2.5)--(8,-2.5);

\node at (-1,-2.9) {$2043$ builiding blocks};

\draw [thick] (-10,4.2)--(-10,4.8);
\draw [thick] (-2,4.2)--(-2,4.8);
\draw [thick] (0,4.2)--(0,4.8);
\draw [thick] (8,4.2)--(8,4.8);

\draw [<->,thick] (-10,4.5)--(-2,4.5);
\draw [<->,thick] (0,4.5)--(8,4.5);
\node at (4,4.8) {$1021$ builiding blocks};
\node at (-6,4.8) {$1021$ builiding blocks};

\end{tikzpicture}
\end{center}
\caption{The locator} \label{fig_locator}
\end{figure}

The \textit{locator} is illustrated in Figure \ref{fig_locator}, and consists of two rows of $2043$ building blocks and another building block that connects the two rows right in the middle. For each north or south side (of a building block) that appears on the outer boundary of the locator, dents and bumps are added to that side, and their labels are shown in Figure \ref{fig_locator}. The two left (resp. right) most building blocks of the locator, whose north and south sides are labeled $\{R||M\}$ (resp. $\{L||M\}$), are called the \textit{right selectors} (resp. \textit{left selectors}). Together, the four selectors will choose one of the simulated Wang tiles from each encoder. All other north or south sides are labeled either $\{||C,M\}$ if they appear on the top or bottom boundary of the locator, or $\{||A,C,M\}$ if they appear in the concave part of the locator. These non-selector building blocks do not serve any special purposes, and their dents are just to make sure that everything else of the tiling works fine without overlaps.

The locator illustrated in Figure \ref{fig_locator} corresponds to the set of Wang tiles in Figure \ref{fig_w3}. In general, for a set of $n$ Wang tiles with $m$ colors and $t=\lceil \log_2 m\rceil $ (in the remainder of this paper, the parameters $n$, $m$ and $t$ are used to describe the size of a general set of Wang tiles), each of the two rows of the locator will consist of $2^{3n}(t+2)-2t-1$ building blocks. The structure of the general locator is the same as the example in Figure \ref{fig_locator}, by adding more non-selector building blocks to lengthen the two rows, and the building block that connects the two rows remains at the center.


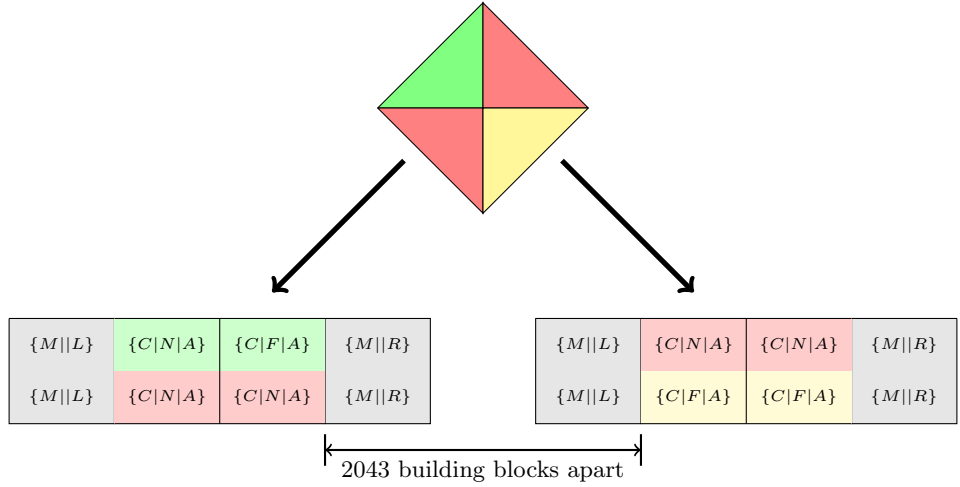
\begin{figure}[ht]
\begin{center}
\begin{tikzpicture}[scale=0.7]

\draw [ fill=green!50] (0,0)--(2,2)--(2,0)--(0,0);
\draw [ fill=red!50] (0,0)--(2,-2)--(2,0)--(0,0);
\draw [ fill=red!50] (4,0)--(2,2)--(2,0)--(4,0);
\draw [ fill=yellow!50] (4,0)--(2,-2)--(2,0)--(4,0);

\draw [ fill=gray!20] (-7,-4)--(-5,-4)--(-5,-6)--(-7,-6)--(-7,-4);
\draw [ fill=gray!20] (-1,-4)--(1,-4)--(1,-6)--(-1,-6)--(-1,-4);
\draw [ fill=green!20,draw=none] (-5,-4)--(-1,-4)--(-1,-5)--(-5,-5)--(-5,-4);
\draw [ fill=red!20,draw=none] (-5,-6)--(-1,-6)--(-1,-5)--(-5,-5)--(-5,-6);
\draw (-1,-4)--(-5,-4);
\draw (-1,-6)--(-5,-6);
\draw (-3,-6)--(-3,-4);

\draw [ fill=gray!20] (3,-4)--(5,-4)--(5,-6)--(3,-6)--(3,-4);
\draw [ fill=gray!20] (11,-4)--(9,-4)--(9,-6)--(11,-6)--(11,-4);
\draw [ fill=red!20,draw=none] (5,-4)--(9,-4)--(9,-5)--(5,-5)--(5,-4);
\draw [ fill=yellow!20,draw=none] (5,-6)--(9,-6)--(9,-5)--(5,-5)--(5,-6);
\draw (9,-4)--(5,-4);
\draw (9,-6)--(5,-6);
\draw (7,-6)--(7,-4);

\foreach \x in {-6,4}
\foreach \y in {-5,-6}
{
\node at (\x,\y+0.5) {\tiny ${\{M||L\}}$}; 
}

\foreach \x in {0,10}
\foreach \y in {-5,-6}
{
\node at (\x,\y+0.5) {\tiny ${\{M||R\}}$}; 
}

\foreach \x in {6,8}
\foreach \y in {-5}
{
\node at (\x,\y+0.5) {\tiny ${\{C|N|A\}}$}; 
}
\foreach \x in {-2,-4}
\foreach \y in {-6}
{
\node at (\x,\y+0.5) {\tiny ${\{C|N|A\}}$}; 
}

\foreach \x in {6,8}
\foreach \y in {-6}
{
\node at (\x,\y+0.5) {\tiny ${\{C|F|A\}}$}; 
}

\foreach \x in {-4}
\foreach \y in {-5}
{
\node at (\x,\y+0.5) {\tiny ${\{C|N|A\}}$}; 
}
\foreach \x in {-2}
\foreach \y in {-5}
{
\node at (\x,\y+0.5) {\tiny ${\{C|F|A\}}$}; 
}

\draw [->,line width=2pt] (3.5,-1)--(6,-3.5);
\draw [->,line width=2pt] (0.5,-1)--(-2,-3.5);

\draw [thick] (-1,-6.2)--(-1,-6.8);
\draw [thick] (5,-6.2)--(5,-6.8);
\draw [<->,thick] (-1,-6.5)--(5,-6.5);
\node at (2,-6.9) {\small $2043$ building blocks apart};

\end{tikzpicture}
\end{center}
\caption{The two portions encoding a Wang tile} \label{fig_encoder_portion}
\end{figure}

The encoder consists of only one row of consecutive building blocks. So, in a sense, the overall shape of the encoder is simpler than that of the locator. However, the layout of different building blocks in the encoder is more complex. Before introducing the complete construction of the encoder, we first define the partial structures that are intended to simulate Wang tiles. A Wang tile from the set illustrated in Figure \ref{fig_w3} is simulated by two portions (the left portion and the right portion) of consecutive building blocks as shown in Figure \ref{fig_encoder_portion}. Each portion consists of two \textit{encoding building blocks} and two \textit{marker} building blocks. The north or south side of an encoding building block is labeled either $\{C|N|A\}$ or $\{C|F|A\}$. The four colors, red, green, blue and yellow in the example of Figure \ref{fig_w3} are encoded by two consecutive labels $\{C|N|A\}\{C|N|A\}$, $\{C|N|A\}\{C|F|A\}$, $\{C|F|A\}\{C|N|A\}$ and $\{C|F|A\}\{C|F|A\}$, respectively. The northwest and southwest edges of a Wang tile are encoded in the left portion, and the northeast and southeast sides are encoded in the right portion. Together, the two portions encode a complete Wang tile. The distance between the encoding building blocks of the two portions is $2043$ building blocks, which is equal to the length of the locator. The distance is crucial in forming the rigid tiling pattern, as we will see in the next section. The left (resp. right) marker is labeled $\{M||L\}$ (resp. $\{M||R\}$) on both the north and south sides. 

In general, a portion consists of $t+2$ building blocks: $t$ building blocks for encoding color edges of Wang tiles, and $2$ markers. The encoding building blocks of the two portions that simulate the same Wang tile are $2^{3n}(t+2)-2t-1$ building blocks apart.


\begin{figure}[ht]
\begin{center}
\begin{tikzpicture}[scale=0.5]

\draw [fill=gray!20] (0,0)--(24,0)--(24,1)--(0,1)--(0,0);
\draw (8,0)--(8,1);
\draw (16,0)--(16,1);

\draw (1,0)--(1,1);
\draw (2,0)--(2,1);
\draw (7,0)--(7,1);
\draw (17,0)--(17,1);
\draw (18,0)--(18,1);
\draw (23,0)--(23,1);

\draw [thick] (8,1.2)--(8,1.8);
\draw [thick] (16,1.2)--(16,1.8);

\draw [thick] (0,-.2)--(0,-.8);
\draw [thick] (8,-.2)--(8,-.8);
\draw [thick] (16,-.2)--(16,-.8);
\draw [thick] (24,-.2)--(24,-.8);

\draw [<->,thick] (0,-.5)--(8,-.5);
\draw [<->,thick] (8,1.5)--(16,1.5);
\draw [<->,thick] (16,-.5)--(24,-.5);
\node at (4,-1) {$1024$ building blocks};
\node at (12,2) {$1021$ building blocks};
\node at (20,-1) {$1024$ building blocks};

\node at (0.5,0.5) {$1$};
\node at (1.5,0.5) {$2$};
\node at (4.5,0.5) {$\cdots \cdots$};
\node at (7.5,0.5) {\tiny $256$};
\node at (16.5,0.5) {$1$};
\node at (17.5,0.5) {$2$};
\node at (20.5,0.5) {$\cdots \cdots$};
\node at (23.5,0.5) {\tiny $256$};

\end{tikzpicture}
\end{center}
\caption{The encoder} \label{fig_encoder}
\end{figure}
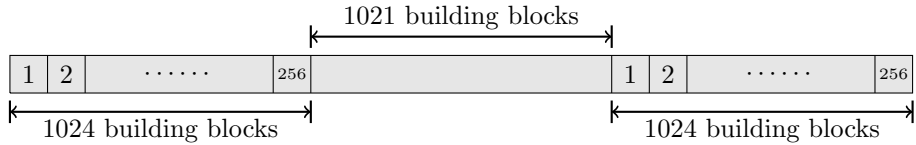

With the portions for encoding the Wang tiles, it is now ready to define the encoder. The \textit{encoder} that encodes the set of Wang tiles in Figure \ref{fig_w3} consists of $3$ segments: a \textit{left encoding segment} of 1024 building blocks, a \textit{central padding segment} of 1021 building blocks, and a \textit{right encoding segment} of 1024 building blocks. The left or right segments are divided into $256$ slots, and each slot consists of $4$ building blocks, which is exactly the length of a portion for simulating half of a Wang tile. The slots are indexed by positive integers from $1$ to $256$ from left to right. Only slots indexed with a power of $2$ are used to encode Wang tiles. Each Wang tile is encoded on the encoder three times in duplicate. In more detail, the left (resp. right) portion for encoding the first Wang tile resides at slots $1$, $2$, $4$ of the left (resp. right) segment, the left (resp. right) portion for encoding the second Wang tile resides in slots $8$, $16$, $32$ of the left (resp. right) segment, and the left (resp. right) portion for encoding the third Wang tile resides in slots $64$, $128$, $256$ of the left (resp. right) segment. Because each Wang tile has three simulated copies in the encoder, only the two portions with the same slot indices (e.g., slot 1 of the left segment and slot 1 of the right segment) are regarded as the belonging to the same simulated Wang tile. All other building blocks except the encoding portions are called \textit{padding building blocks}. In other words, the padding building blocks include the central padding segment and all the building blocks that reside in slots not indexed by the powers of $2$. Padding building blocks are labeled $\{||\}$ (see Figure \ref{fig_bb_blank}) on both the north and south sides.

In general, for a set of Wang tiles with parameters $n$, $m$ and $t$, the left or right encoding segment consists of $2^{3n-1}(t+2)$ building blocks, and the central padding segment consists of $2^{3n-1}(t+2)-t-1$ building blocks. Recall that in the general situation, a portion has $t+2$ building blocks. Therefore, a slot in general also consists of $t+2$ building blocks, and the left or right encoding segment is divided into $2^{3n-1}$ slots indexed by integers from $1=2^0$ to $2^{3n-1}$. The left portion (resp. right portion) of the $i$-th Wang tile appears on slots $2^{3(i-1)}$, $2^{3(i-1)+1}$ and $2^{3(i-1)+2}$ of the left encoding segment (resp. right encoding segment), for $1\leq i \leq n$. All other building blocks (i.e., building blocks of slots that are not indexed by powers of $2$, and building blocks on the padding segment) are just the blank building block. It can be checked that the distance between the encoding building blocks of the two portions that simulate the same Wang tile is $2^{3n}(t+2)-2t-1$ building blocks.

The sizes of the encoder and locator are mutually affected by each other. To conclude this section, we summarize the order in which the sizes are determined by the parameters ($n$, $m$, $t$) of the set of Wang tiles. First, the left or right encoding segment of the encoder is determined by $n$ and $t$. A portion (equivalently, a slot) must have $t+2$ building blocks to encode the colors. Because simulated Wang tiles reside only in slots indexed by powers of $2$ and each simulated Wang tile appears three times, the largest index of the slots should be $2^{3n-1}$. So the length of the one (left or right) encoding segment is 
$$2^{3n-1}(t+2)$$
building blocks. Second, the length of one concave part of the encoder should not be long enough to contain an encoding segment of the encoder completely, and must leave the encoding building block of at least one slot outside. So the length on one concave part of the locator should be $2^{3n-1}(t+2)-t-1$. As a consequence, the length of the locator is determined to be 
$$2  \big( 2^{3n-1}(t+2)-t-1 \big) +1 = 2^{3n}(t+2)-2t-1 $$
building blocks. Third, we compute the length of the padding segment of the encoder such that the distance between the encoding building blocks of two portion (two slots) of the same simulated Wang tiles is exactly the length of the locator. So the padding segment is
$$\big (  2^{3n}(t+2)-2t-1 \big ) - \big (  2^{3n-1}(t+2)-t \big ) = 2^{3n-1}(t+2)-t-1 $$
building blocks. Finally, by adding up the lengths of the two encoding segment and one padding segment, the total length of the encoder is 
$$2\big ( 2^{3n-1}(t+2) \big) + 2^{3n-1}(t+2)-t-1 = 3\big ( 2^{3n-1}(t+2) \big) -t -1 $$
building blocks.

\section{Rigid Tiling Pattern with Local Flexibility}\label{sec_proof}

We prove the main result in this section.

\begin{proof}[Proof of Theorem \ref{thm_main}]
Given an arbitrary set of Wang tiles, we have already constructed a set of $4$ tiles, a tiny filler, a linker, a locator, and an encoder, in the previous section. To complete the proof, we will show that the set of $4$ tiles can tile the plane if and only if the corresponding set of Wang tiles can tile the plane.

\begin{itemize}

\item \textbf{The tiny filler alone cannot tile the plane.} By the pseudo-hexagon characterization of translational single tile that can tile the plane \cite{bn91}, it is obvious that the tiny filler cannot tile the plane with translated copies of itself.

\item \textbf{The locators must be used.} Since tiny filler cannot tile the plane by itself, so the other three kinds of tiles must be used. If the encoder is used, then the locator must be used. Because the markers (labeled $\{M||L\}$ or $\{M||R\}$) in the encoder can only be matched by the selectors (labeled $\{L||M\}$ or $\{R||M\}$) in the locator by Lemma \ref{lem_bb}. If the linker (labeled $\{A||C\}$) is used, then either the encoder or the locator must be used in order to match the bump at the location $A$ of the linker. So in any case, the locator must be used.

\item \textbf{The locators form a rigid lattice pattern.} Put a locator any where in the plane (see the locator marked $1$ in Figure \ref{fig_pattern}). Because the left selector in the locator can only be matched by the left marker in the encoder, so an encoder (marked $2$ in Figure \ref{fig_pattern}) must be placed somewhere inside the concave area of locator $1$. By the same reason, the first right marker (of encoder $2$) that exposed outside the locator $1$  must also be matched by other locators, so the locators $3$ and $4$ are place above and below encoder $2$, respectively. Apply the above arguments repeatedly to newly added locators, we obtained an infinite rigid lattice of locators as shown in Figure \ref{fig_pattern}. Note that the part of encoder $2$ illustrated in dark orange in Figure \ref{fig_pattern} (the part exposed outside locators $1$, $3$, and $4$) is exactly the $t$ encoding building blocks (excluding the markers) of the left encoding portion of a simulated Wang tile. As mentioned in previous section, the distance between the encoding building blocks of the two portions of the same simulated Wang tile is equal to the length of the locator, so the other part (of encoder $2$) that is exposed outside the locators (illustrated in purple in Figure \ref{fig_pattern}) must be the $t$ encoding building blocks of the right portion of the same simulated Wang tile as the dark orange part. Therefore, exactly one simulated Wang tiles of every encoder is exposed outside the rigid lattice of locators.


\begin{figure}[ht]
\begin{center}
\begin{tikzpicture}[scale=0.2]

\foreach \x in {0,32}
\foreach \y in {0}
{
\draw [ fill=gray!20] (\x+0,\y+0)--(\x+15,\y+0)--(\x+15,\y+1)--(\x+8,\y+1)--(\x+8,\y+2)--(\x+15,\y+2)--(\x+15,\y+3)--(\x+0,\y+3)--(\x+0,\y+2)--(\x+7,\y+2)--(\x+7,\y+1)--(\x+0,\y+1)--(\x+0,\y+0);
}

\foreach \x in {-16,16,48}
\foreach \y in {-2,2}
{
\draw [ fill=gray!20] (\x+0,\y+0)--(\x+15,\y+0)--(\x+15,\y+1)--(\x+8,\y+1)--(\x+8,\y+2)--(\x+15,\y+2)--(\x+15,\y+3)--(\x+0,\y+3)--(\x+0,\y+2)--(\x+7,\y+2)--(\x+7,\y+1)--(\x+0,\y+1)--(\x+0,\y+0);
}

\draw [fill=orange!20] (9,1)--(34,1)--(34,2)--(9,2)--(9,1);

\foreach \x in {19}
\foreach \y in {2}
{
\draw [fill=orange!20] (\x+9,1+\y)--(\x+34,1+\y)--(\x+34,2+\y)--(\x+9,2+\y)--(\x+9,1+\y);
}

\foreach \x in {20}
\foreach \y in {-2}
{
\draw [fill=orange!20] (\x+9,1+\y)--(\x+34,1+\y)--(\x+34,2+\y)--(\x+9,2+\y)--(\x+9,1+\y);
}

\foreach \x in {-17}
\foreach \y in {2}
{
\draw [fill=orange!20] (\x+9,1+\y)--(\x+34,1+\y)--(\x+34,2+\y)--(\x+9,2+\y)--(\x+9,1+\y);
}

\foreach \x in {-12}
\foreach \y in {-2}
{
\draw [fill=orange!20] (\x+9,1+\y)--(\x+34,1+\y)--(\x+34,2+\y)--(\x+9,2+\y)--(\x+9,1+\y);
}

\node at (7.5,1) {\tiny $1$};
\node at (18,1.5) {\tiny $2$};
\node at (23.5,3) {\tiny $3$};
\node at (23.5,0) {\tiny $4$};

\draw [fill=orange!50,draw=none] (15,2)--(16,2)--(16,1)--(15,1)--(15,2);

\draw [fill=violet!50,draw=none] (32,2)--(31,2)--(31,1)--(32,1)--(32,2);

\end{tikzpicture}
\end{center}
\caption{The rigid tiling pattern (not to scale)} \label{fig_pattern}
\end{figure}
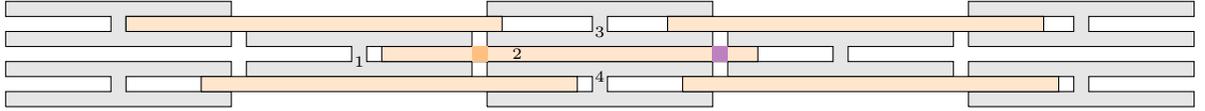

\item \textbf{The encoders are flexible locally.} In contrast to the rigidity of the lattice form by the locators, each encoder has a local flexibility to shift left or right within the horizontal zone between the locators. The markers in the encoders, the selectors in the locators and the length of the locators make sure that the two parts of every encoder that exposed outside the locators are the encoding building blocks of the same simulated Wang tile. This flexibility simulates the choice of a Wang tile in each place.

\item \textbf{The locators and the encoders can be compatible.} Still, we have to check the rigid lattice of locators and flexible encoders within in the lattice illustrated in Figure \ref{fig_pattern} do not overlap with each other. By examining every pair of adjacent building blocks between the locators and encoders, we notice that there is only one case that might cause a conflict, as illustrated in Figure \ref{fig_adjacent_f} where the dark orange part represent the encoding building blocks of the encoders. In Figure \ref{fig_adjacent_f}, slot $i$ of the right encoding segment of the top encoder is vertically aligned with slot $i$ of the left encoding segment of the bottom encoder, for each $i$. In particular, the encoding building blocks (in dark orange) are all aligned vertically. In the locations marked by red dots in Figure \ref{fig_adjacent_f}, $F$ bumps (resp. $N$ bumps) of the encoding building blocks from above and the $N$ bumps (resp. $F$ bumps) from below may overlap.

To avoid the possible overlap in Figure \ref{fig_adjacent_f}, we make use of the local flexibility of the encoders, and shift one of them if needed to keep away from the situation that the right encoding segment of the top encoder is totally aligned with the left encoding segment of the bottom encoder. The resulted local configuration is illustrated in Figure \ref{fig_adjacent}. Let the exposed encoding building blocks of the top and bottom encoders belong to slot $i$ and $j$, respectively. Because the two encoding segments are not aligned, so $i\neq j$. We claim that the two exposed encoding building blocks are the only building blocks (among the right encoding segment of the top encoder and the left segment of the bottom encoder) that are vertically aligned. Suppose to the contrary that there is another pair of encoding slots, slot $i'$ from the top and slot $j'$ from the bottom, are vertically aligned. Then $i-i'=j-j'$. However, all the encoding slots are powers of $2$. Under this condition, $i-i'=j-j'$ implies $i=j$ and $i'=j'$, which is a contradiction.


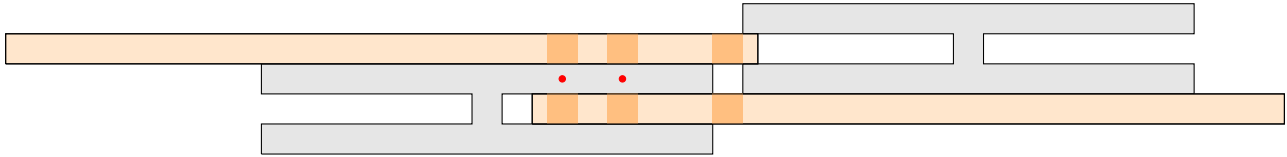
\begin{figure}[ht]
\begin{center}
\begin{tikzpicture}[scale=0.4]

\foreach \x in {0}
\foreach \y in {0}
{
\draw [ fill=gray!20] (\x+0,\y+0)--(\x+15,\y+0)--(\x+15,\y+1)--(\x+8,\y+1)--(\x+8,\y+2)--(\x+15,\y+2)--(\x+15,\y+3)--(\x+0,\y+3)--(\x+0,\y+2)--(\x+7,\y+2)--(\x+7,\y+1)--(\x+0,\y+1)--(\x+0,\y+0);
}

\foreach \x in {16}
\foreach \y in {2}
{
\draw [ fill=gray!20] (\x+0,\y+0)--(\x+15,\y+0)--(\x+15,\y+1)--(\x+8,\y+1)--(\x+8,\y+2)--(\x+15,\y+2)--(\x+15,\y+3)--(\x+0,\y+3)--(\x+0,\y+2)--(\x+7,\y+2)--(\x+7,\y+1)--(\x+0,\y+1)--(\x+0,\y+0);
}

\draw [fill=orange!20] (9,1)--(34,1)--(34,2)--(9,2)--(9,1);

\draw [fill=orange!50,draw=none] (15,1)--(16,1)--(16,2)--(15,2)--(15,1);
\draw [fill=orange!50,draw=none] (11.5,1)--(12.5,1)--(12.5,2)--(11.5,2)--(11.5,1);
\draw [fill=orange!50,draw=none] (10.5,1)--(9.5,1)--(9.5,2)--(10.5,2)--(10.5,1);
\draw  (9,1)--(34,1)--(34,2)--(9,2)--(9,1);

\foreach \x in {-17.5}
\foreach \y in {2}
{
\draw [fill=orange!20] (\x+9,1+\y)--(\x+34,1+\y)--(\x+34,2+\y)--(\x+9,2+\y)--(\x+9,1+\y);
}
\draw [fill=orange!50,draw=none] (11.5,3)--(12.5,3)--(12.5,4)--(11.5,4)--(11.5,3);
\draw [fill=orange!50,draw=none] (15,3)--(16,3)--(16,4)--(15,4)--(15,3);
\draw [fill=orange!50,draw=none] (10.5,3)--(9.5,3)--(9.5,4)--(10.5,4)--(10.5,3);

\foreach \x in {-17.5}
\foreach \y in {2}
{
\draw (\x+9,1+\y)--(\x+34,1+\y)--(\x+34,2+\y)--(\x+9,2+\y)--(\x+9,1+\y);
}

\filldraw[red] (10,2.5) circle (3pt);

\filldraw[red] (12,2.5) circle (3pt);

\end{tikzpicture}
\end{center}
\caption{Two adjacent encoders possibly overlap (not to scale).} \label{fig_adjacent_f}
\end{figure}

The reason that each simulated Wang tiles appear three times in the encoder is to avoid possible overlap (as illustrated in Figure \ref{fig_adjacent_f}) without change of the exposed simulated Wang tiles. Note that encoders on the same horizontal row will never overlap with each other. So we can fix the location of a row of encoders, and determine the locations of all the rest encoders row by row upwards and downwards. In this order, each newly added encoder is adjacent to exactly two encoders whose locations have been fixed. Without loss of generality, assume the newly added encoder is adjacent to two encoders below it. Therefore, the left encoding segment (resp. right encoding segment) the newly added encoder cannot be totally aligned to the right encoding segment (resp. left encoding segment) of the encoder in the southwest direction (resp. southeast direction). On the other hand, the newly added encoder has three different locations (recall that each simulated Wang tile has three copies in an encoder) to choose from without changing the exposed simulated Wang tiles. By pigeonhole principle, there exists a location that avoids totally aligned of encoding segments with the two encoders below it.


\begin{figure}[ht]
\begin{center}
\begin{tikzpicture}[scale=0.4]

\foreach \x in {0}
\foreach \y in {0}
{
\draw [ fill=gray!20] (\x+0,\y+0)--(\x+15,\y+0)--(\x+15,\y+1)--(\x+8,\y+1)--(\x+8,\y+2)--(\x+15,\y+2)--(\x+15,\y+3)--(\x+0,\y+3)--(\x+0,\y+2)--(\x+7,\y+2)--(\x+7,\y+1)--(\x+0,\y+1)--(\x+0,\y+0);
}

\foreach \x in {16}
\foreach \y in {2}
{
\draw [ fill=gray!20] (\x+0,\y+0)--(\x+15,\y+0)--(\x+15,\y+1)--(\x+8,\y+1)--(\x+8,\y+2)--(\x+15,\y+2)--(\x+15,\y+3)--(\x+0,\y+3)--(\x+0,\y+2)--(\x+7,\y+2)--(\x+7,\y+1)--(\x+0,\y+1)--(\x+0,\y+0);
}

\draw [fill=orange!20] (9,1)--(34,1)--(34,2)--(9,2)--(9,1);

\draw [fill=orange!50,draw=none] (15,1)--(16,1)--(16,2)--(15,2)--(15,1);
\draw [fill=orange!50,draw=none] (11.5,1)--(12.5,1)--(12.5,2)--(11.5,2)--(11.5,1);
\draw [fill=orange!50,draw=none] (10.5,1)--(9.5,1)--(9.5,2)--(10.5,2)--(10.5,1);
\draw  (9,1)--(34,1)--(34,2)--(9,2)--(9,1);

\foreach \x in {-14}
\foreach \y in {2}
{
\draw [fill=orange!20] (\x+9,1+\y)--(\x+34,1+\y)--(\x+34,2+\y)--(\x+9,2+\y)--(\x+9,1+\y);
}
\draw [fill=orange!50,draw=none] (16,3)--(15,3)--(15,4)--(16,4)--(16,3);
\draw [fill=orange!50,draw=none] (19.5,3)--(18.5,3)--(18.5,4)--(19.5,4)--(19.5,3);
\draw [fill=orange!50,draw=none] (14,3)--(13,3)--(13,4)--(14,4)--(14,3);

\foreach \x in {-14}
\foreach \y in {2}
{
\draw (\x+9,1+\y)--(\x+34,1+\y)--(\x+34,2+\y)--(\x+9,2+\y)--(\x+9,1+\y);
}

\node at (15.5,3.5) {$i$};
\node at (15.5,1.5) {$j$};

\end{tikzpicture}
\end{center}
\caption{Two adjacent encoders compatible with each other (not to scale).} \label{fig_adjacent}
\end{figure}
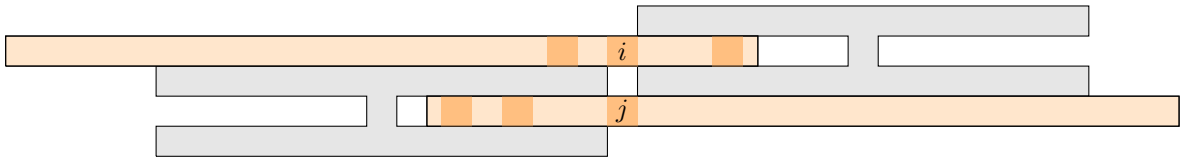

For two adjacent exposed sequence of encoding building blocks, say $i$ and $j$ in Figure \ref{fig_adjacent}, the label sequence of south side of building blocks of $i$ must be the same as the label sequence of the north side of building blocks of $j$, in order to avoid overlap. This is equivalent to the two adjacent simulated edge of Wang tiles are of the same color.


\item \textbf{The gaps are filled by linkers and tiny fillers.} As long as there is no overlaps in the previous step, it is easy to check all the remaining gaps can be filled by the linkers and the tiny fillers.
\end{itemize} 
The above arguments have shown that any tiling of the set of $4$ tiles simulates a tiling of the corresponding set of Wang tiles. Conversely, any tiling of the set of Wang tile can also be converted to a tiling of the set of $4$ tiles. By Theorem \ref{thm_berger}, translational tiling of the plane with $4$ (disconnected) polyominoes is undecidable. \end{proof}

As mentioned in the first section of this paper, our overall strategy to reduce the number of tiles is to merge the two linkers in Ollinger's method into one linker. This strategy is made possible only with the help of two novel techniques. The first technique is the unequal-distance placement of the encoding portions within the encoder. The second technique is to add redundancy by simulating each Wang tile three times in the encoder.

\section{Conclusion}\label{sec_remark}

The translational tiling problem with $k$ polyominoes (Problem \ref{pro_main}) is the $2$-dimensional case of the more general problem of translational tiling of $\mathbb{Z}^n$ with a set of $k$ tiles (both $n$ and $k$ are fixed).

\begin{problem}[Translational tiling of $\mathbb{Z}^n$ with a set of $k$ tiles] \label{pro_gen}
A tile is a finite subset of $\mathbb{Z}^n$. Let $k$ and $n$ be fixed positive integers. Is there a general algorithm to decide whether $\mathbb{Z}^n$ can be tiled by translated copies of tiles in an arbitrary set $S$ of $k$ tiles?
\end{problem}

The two remarkable results \cite{gt24a,gt24b} of Greenfeld and Tao suggest that Problem \ref{pro_gen} may be undecidable for some fixed parameters $(n,k)=(n,1)$. Problem \ref{pro_gen} is known to be undecidable for $(n,k)=(3,4)$ and $(n,k)=(4,3)$ \cite{yz24e}. By combining the techniques in \cite{yz24e} and the techniques (i.e, unequal-distance placement and duplications of the simulated Wang tiles) introduced in this paper, we can also improve the undecidability result from $(n,k)=(4,3)$ to $(n,k)=(3,3)$, which is prepared in another paper.

For dimension $2$, it remains open to determine whether the translational tiling problem is undecidable for $(n,k)=(2,3)$ or $(n,k)=(2,2)$. Also, is it possible to prove the undecidability of $(n,k)=(2,4)$ with $4$ connected tiles?



\end{document}